 \newcolumntype{Y}{>{\centering\arraybackslash}X}
\renewcommand*{\dot}[1]{%
  \accentset{\mbox{\bfseries .}}{#1}}
\newcommand{\rr}{\mathbb{R}}
\newcommand\dotprod[1]{\left<{#1}\right>}
\newtheorem{thm}{Theorem}
\newtheorem{prop}{Proposition}
\newtheorem{lemm}{Lemma}
\newtheorem{cor}{Corollary}
\theoremstyle{definition}
\newtheorem{defi}{Definition}
\newtheorem{rem}{Remark}
\title[Finsler metrics and semi-symmetric compatible linear connections]{Finsler metrics and semi-symmetric compatible linear connections}
\author{Csaba Vincze}
\address{Institute of Mathematics, University of Debrecen, H-4002 Debrecen, P.O.Box 400, Hungary}
\email{csvincze@science.unideb.hu}
\author{Márk Oláh}
\address{Institute of Mathematics, University of Debrecen, Doctoral School of Mathematical and Computational Sciences, H-4002 Debrecen, P.O.Box 400, Hungary}
\email{olah.mark@science.unideb.com}
\keywords{Convex bodies, Tangent hyperplanes, Minkowski norm, Finsler spaces, Generalized Berwald spaces, Semi-symmetric linear connections, Intrinsic Geometry.}
\subjclass{53C60, 58B20}
\begin{document}

\maketitle

\begin{abstract} Finsler metrics are direct generalizations of Riemannian metrics such that the quadratic Riemannian indicatrices in the tangent spaces of a manifold are replaced by more general convex bodies as unit spheres. A linear connection on the base manifold is called compatible with the Finsler metric if the induced parallel transports preserve the Finslerian length of tangent vectors. Finsler manifolds admitting compatible linear connections are called generalized Berwald manifolds \cite{Wag1}. Compatible linear connections are the solutions of the so-called compatibility equations containing the components of the torsion tensor as unknown quantities. Although there are some theoretical results for the solvability of the compatibility equations  (monochromatic Finsler metrics \cite{BM}, extremal compatible linear connections and algorithmic solutions \cite{V14}), it is very hard to solve in general because compatible linear connections may or may not exist on a Finsler manifold and may or may not be unique.  Therefore special cases are of special interest. One of them is the case of the so-called semi-symmetric compatible linear connection with decomposable torsion tensor. It is proved \cite{V10} (see also \cite{V11}) that such a compatible linear connection must be uniquely determined. 
The original proof is based on averaging in the sense that the 1-form in the decomposition of the torsion tensor can be expressed by integrating differential forms on the tangent manifold over the Finslerian indicatrices. The integral formulas are very difficult to compute in practice. In what follows we present a new proof for the unicity result by using linear algebra and some basic facts about convex bodies. We present an explicit formula for the solution without integration. The method has a new contribution to the problem as well:  necessary conditions of the solvability are formulated in terms of intrinsic equations without unknown quantities. They are sufficient if and only if the solution depends only on the position.  
\end{abstract}

\section{Compatibility equations in Finsler geometry}

Let $M$ be a smooth connected manifold with a local coordinate system $u^1, \ldots, u^n$. The induced local coordinate system on the tangent manifold $TM$ consists of the functions $x^1, \ldots, x^n$ and $y^1, \ldots, y^n$ given by $x^i(v):=u^i\circ \pi (v)=u^i(p)=:p^i$, where  $\pi \colon TM \to M$ is the canonical projection and $y^i(v)=v(u^i)$, $i=1, \ldots, n$. Throughout the paper, we will use the shorthand notations
\begin{equation}
\partial_i := \dfrac{\partial}{\partial x^i} \hspace{1cm} \text{and} \hspace{1cm} \dot{\partial}_i:= \dfrac{\partial}{\partial y^i}.
\end{equation}

A Finsler metric \cite{BSC} on a manifold is a smoothly varying family of  Minkowski norms in the tangent spaces. It is a direct generalization of Riemannian metrics, with inner products (quadratic indicatrices) in the tangent spaces replaced by Minkowski norms (smooth strictly convex bodies).

\begin{defi} A \textbf{Finsler metric} is a non-negative continuous function $F\colon TM\to \mathbb{R}$ satisfying the following conditions:  $\displaystyle{F}$ is smooth on the complement of the zero section (\emph{regularity}), $\displaystyle{F(tv)=tF(v)}$ for all $\displaystyle{t> 0}$ (\emph{positive homogeneity}), $F(v)= 0$ if and only if $v={\bf 0}$  (\emph{definiteness}) and the Hessian $\displaystyle{g_{ij}=\dot{\partial}_{i}\dot{\partial}_{j}E}$ of the energy function $E=F^2/2$ is positive definite at all non-zero elements $\displaystyle{v\in T_pM}$ (\emph{strong convexity}). The pair $(M,F)$ is called a \textbf{Finsler manifold}.
\end{defi}

On a Riemannian manifold we obviously have compatible linear connections in the sense that the induced parallel transports preserve the Riemannian length of tangent vectors (metric linear connections). Following the classical Christoffel process it is clear that such a linear connection is uniquely determined by the torsion tensor. In contrast to the Riemannian case, non-Riemannian Finsler manifolds admitting compatible linear connections form a special class of spaces in Finsler geometry. They are called generalized Berwald manifolds \cite{Wag1}. It is known that some Finsler manifolds do not admit any compatible linear connections because of topological constraints, some have infinitely many compatible linear connections and it can also happen that the compatible linear connection is uniquely determined \cite{VOM}, see also  \cite{RandersGBM}.

\begin{defi} A linear connection is \textbf{compatible} with the Finsler metric if the induced parallel transports preserve the Finslerian length of tangent vectors. Finsler manifolds admitting compatible linear connections are called  \textbf{generalized Berwald manifolds}.
\end{defi}

In terms of local coordinates, equations
\begin{equation}
\label{ceq_christoffel}
X_i^h F := \partial_i F-y^j \left( {\Gamma}^k_{ij}\circ \pi \right) \dot{\partial}_k F=0 \hspace{1cm} (i=1,\dots,n)
\end{equation} 
form necessary and sufficient conditions for a linear connection $\nabla$ to be compatible with the Finsler metric $F$. Equations (\ref{ceq_christoffel}) are called \textbf{compatibility equations} or CEQ for short. The fundamental result of generalized Berwald manifold theory  states that a compatible linear connection $\nabla$ is always Riemann metrizable \cite{V5}, i.e. $\nabla$ must be a metric linear connection with respect to a Riemannian metric $\gamma$. Such a Riemannian metric can be given by integration of $g_{ij}$ on the indicatrix hypersurfaces \cite{V5}, see also \cite{Cram} and \cite{Mat1}. It is the so-called \textbf{averaged Riemannian metric}. Therefore CEQ can be reformulated by replacing the Christoffel symbols  $\Gamma_{ij}^k$ by the torsion tensor components \cite{V14}, see also  \cite{RandersGBM}. Using the horizontal vector fields 
\[ X_i^{h*}:=\partial_i -y^j \left( {\Gamma}^{k*}_{ij}\circ \pi \right)  \dot{\partial}_k \hspace{1cm} (i=1,\dots,n), \]
where ${\Gamma}^{k*}_{ij}$ are the Christoffel symbols of the L\'{e}vi-Civita connection of the averaged Riemannian metric $\gamma$, CEQ takes the form
\begin{equation}
y^j \left(T^{l}_{jk}\gamma^{kr}\gamma_{il}+T^{l}_{ik}\gamma^{kr}\gamma_{jl}-T_{ij}^r\right) \dot{\partial}_r F=-2X_i^{h^*}F\hspace{1cm} (i=1,\dots,n).
\end{equation}
The unknown quantities  $T_{ab}^c$ are the torsion tensor components  of the compatible linear connection. 

In what follows we are going to use normal coordinates with respect to the Riemannian metric $\gamma$ around a given point $p\in M$. The coordinate vector fields $\partial/\partial u^1, \dots, \partial/\partial u^n$ form an orthonormal basis in $T_p M$, i.e. $\gamma_{ij}(p)=\delta_{ij}$ and ${\Gamma}^{k*}_{ij}(p)=0$. Therefore $X_i^{h*}(v)=\partial_i(v)$ for any $v\in T_pM$ and CEQ takes the form 
\begin{equation}
\label{CEQ-tors2}
\sum_{a<b,c} \sigma_{ab;i}^c T_{ab}^{c} = -2\partial_i F \hspace{1cm} (i=1, \dots, n),
\end{equation}
where the coefficients are 
\begin{equation}
\label{CEQ-coeff2}
\sigma_{ab;i}^c := \delta_i^a f_{cb} + \delta_i^b f_{ac} + \delta_i^c f_{ab}, \hspace{1cm} f_{ij} := y^i \dot{\partial}_j F - y^j \dot{\partial}_i F.
\end{equation}
If none of the indices $a,b,c$ are equal to $i$ then $\sigma_{ab;i}^c=0$. Otherwise the table shows the possible cases, where indices are separated according to their values (equal indices are put into the same cell and different cells contain different values).

\begin{center}
{\tabulinesep=1pt \begin{tabu} {|c||c|c|c||l|}
\hline 
 & \multicolumn{3}{c||}{\textrm{indices}} & \textrm{the coefficients} \\ 
\hline 
\hline
1. & $i=a$ & $b$ & $c$ & $\sigma_{ib;i}^{c}=f_{cb}$ \\ 
\hline 
2. & $i=a$ & $b=c$ &   & $\sigma_{ib;i}^{b}=0$ \\ 
\hline 
3. & $i=b$ & $a$ & $c$ & $\sigma_{ai;i}^{c}=f_{ac}$ \\ 
\hline 
4. & $i=b$ & $a=c$ &   & $\sigma_{ai;i}^{a}=0$ \\ 
\hline 
5. & $i=c$ & $a$ & $b$ & $\sigma_{ab;i}^{i}=f_{ab}$ \\ 
\hline 
6. & $i=a=c$ & $b$ &   & $\sigma_{ib;i}^{i}=2f_{ib}$ \\ 
\hline 
7. & $i=b=c$ & $a$ &   & $\sigma_{ai;i}^{i}=2f_{ai}$ \\ 
\hline 
\end{tabu} }
\end{center}
Therefore the $i$-th compatibility equation at the point $p$ is
\begin{equation} \label{CEQ-general}
\sideset{}{'}\sum_{a} 2f_{ia} T_{ia}^i + \sideset{}{'}\sum_{a<b} f_{ab} \left( T_{ib}^a + T_{ab}^i + T_{ai}^b \right)  = -2\, \partial_i F,
\end{equation}
where the primed summation means summing for $a \neq i$ in the first one, and in the second one, $a$ and $b$ where $i\neq a, b$. 
  
\section{The geometry of the tangent spaces}
The following table shows a panoramic view about the geometric structures of the tangent space $T_pM$ due to the simultaneously existing Finsler and Riemannian metrics.
\begin{center}
\begin{tabularx}{\textwidth}{|Y|Y|Y|}
\hline 
\textbf{Finsler structure} &   & \textbf{Riemannian structure} \\ 
\hline 
Minkowski norm & metric on $T_p M$ & Euclidean norm and inner product	 \\ 
\hline 
Finslerian spheres $F_p(\lambda)$ & level sets of $\lambda\in \rr_+$ & Euclidean spheres $R_p(\lambda)$ \\ 
\hline 
$\mathcal{F}_v$ & tangent hyperplanes of level sets at $v\in T_pM$ & $\mathcal{R}_v$ \\ 
\hline
$\mathcal{LF}_v:= \mathcal{F}_v - v$  & linear tangent hyperplanes at $v\in T_pM$ & $\mathcal{LR}_v := \mathcal{R}_v - v$ \\ 
\hline
$G := \mathrm{grad} \, F = [ \dot{\partial}_1 F, \dots,  \dot{\partial}_n F]$ & normal vector fields (w.r.t. $\gamma$) & $C := [y^1, \dots, y^n]$ \\ 
\hline 
\end{tabularx} 
\end{center}
Both gradient vector fields $G$ and $C$ are nonzero everywhere on $T_p^{\circ} M := T_p M \backslash \{ \textbf{0} \}$. For every element $v \in T_p^{\circ} M$ the tangent hyperplanes of the Finslerian and the Riemannian (Euclidean) spheres passing through $v$ can be related as follows.
\begin{itemize}
\item If $\mathcal{F}_v=\mathcal{R}_v$, i.e. $G_v \parallel C_v$, we call the point $v$ a \textbf{vertical contact point} of the metrics.
\item If $\mathcal{F}_v \neq \mathcal{R}_v$, i.e. $G_v$ and $C_v$ are linearly independent, then the intersection $\mathcal{LF}_v \cap \mathcal{LR}_v$ is the orthogonal complement of $\mathrm{span}(C_v, G_v)$, and thus $\mathcal{F}_v \cap \mathcal{R}_v$ is an affine subspace of dimension $n-2$.
\end{itemize}

Let us define the vector field
\begin{equation} \label{H-def}
H_v := \left[ \partial_1 F(v), \dots, \partial_n F(v) \right]. 
\end{equation}
An element $v \in T_p^{\circ} M$ is a \textbf{horizontal contact point} of the metrics if $H_v$ is the zero vector. It can be easily seen  that if $T_p M$ is a vertical contact tangent space, i. e. all of its non-zero elements are vertical contact, then the Finsler metric is a scalar multiple of $\gamma$ at the point $p\in M$. The quadratic indicatrix of a generalized Berwald metric at a single point means quadratic indicatrices at all points of the (connected) base manifold because the tangent spaces are related by linear isometries due to the parallel transports with respect to the compatible linear connection with the Finsler metric. Therefore such a generalized Berwald manifold reduces to a Riemannian manifold.  At a horizontal contact point $v\in T_pM$, equations of CEQ are homogeneous. If $T_p M$ is a horizontal contact tangent space, i. e. all of its non-zero elements are horizontal contact, then $T \equiv 0$ is a solution of CEQ at $p\in M$.

\subsection{A useful family of vector fields}  Let us define the vector fields
\begin{equation} \label{rowvector}
f_i(v) := \left[ f_{i1}(v), f_{i2}(v), \dots, f_{in}(v) \right]^T \hspace{1cm} (i=1,\dots, n)
\end{equation}
on $T_p^{\circ} M$ to help in proving some elementary properties and solving CEQ.

\begin{lemm} \label{rowvectorspan}  For any $v \in T_p^{\circ} M$, we have $\mathrm{span}(f_1(v), \dots, f_n(v)) \subseteq  \mathrm{span}(G_v, C_v)$.
\end{lemm}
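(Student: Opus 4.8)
The plan is to show each $f_i(v)$ lies in $\mathrm{span}(G_v, C_v)$ by writing it explicitly in terms of the gradient vector $G_v = [\dot\partial_1 F, \dots, \dot\partial_n F]$ and the position vector $C_v = [y^1, \dots, y^n]$. The key observation is that the components $f_{ij} = y^i \dot\partial_j F - y^j \dot\partial_i F$ are built precisely from products of entries of $C_v$ and $G_v$. So I would first stare at the $j$-th component of the vector $f_i(v)$, namely $f_{ij} = y^i \dot\partial_j F - y^j \dot\partial_i F$, and try to recognize it as the $j$-th component of a linear combination $\alpha\, G_v + \beta\, C_v$ whose coefficients $\alpha, \beta$ depend on $i$ but not on the running index $j$.

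Reading off the two terms, the piece $y^i \dot\partial_j F$ is $y^i$ (a scalar depending only on $i$) times the $j$-th component of $G_v$, and the piece $-y^j \dot\partial_i F$ is $-\dot\partial_i F$ (again a scalar depending only on $i$) times the $j$-th component of $C_v$. Hence I expect the clean identity
\begin{equation} \label{fi-decomp}
f_i(v) = y^i(v)\, G_v - \bigl(\dot\partial_i F(v)\bigr)\, C_v \hspace{1cm} (i=1, \dots, n),
\end{equation}
which holds componentwise by the very definition of $f_{ij}$ in \eqref{CEQ-coeff2}. This single formula immediately gives $f_i(v) \in \mathrm{span}(G_v, C_v)$ for every $i$, and therefore $\mathrm{span}(f_1(v), \dots, f_n(v)) \subseteq \mathrm{span}(G_v, C_v)$, which is exactly the claim.

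There is essentially no obstacle here: the statement is a direct algebraic consequence of the definitions, and the only thing to verify is the bookkeeping in \eqref{fi-decomp}, i.e. that the scalar multipliers $y^i$ and $\dot\partial_i F$ correctly pull out of the $j$-indexed components. I would simply check \eqref{fi-decomp} entry by entry and then conclude. If one wants to be slightly more careful about the transpose appearing in \eqref{rowvector}, I would note that $G_v$ and $C_v$ as listed in the panoramic table are row vectors, so \eqref{fi-decomp} should be read with the appropriate transpose so that all three objects $f_i(v)$, $G_v$, $C_v$ are column vectors; this is a purely notational matter and does not affect the span statement.
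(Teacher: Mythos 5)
Your proof is correct and is essentially identical to the paper's own argument: both establish the componentwise identity $f_i = y^i \cdot G - \dot{\partial}_i F \cdot C$ directly from the definition of $f_{ij}$ in \eqref{CEQ-coeff2} and conclude the span inclusion immediately. Nothing further is needed.
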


\begin{proof} Observe that $f_i$ can be written as
\begin{equation} \label{rowvector-lincomb}
f_i = \begin{bmatrix} f_{i1} \\ f_{i2} \\ \vdots \\ f_{in}\end{bmatrix} = 
\begin{bmatrix} y^i \dot{\partial}_1 F - y^1  \dot{\partial}_i F \\ y^i  \dot{\partial}_2 F - y^2  \dot{\partial}_i F \\ \vdots \\ y^i  \dot{\partial}_n F - y^n  \dot{\partial}_i F \end{bmatrix} =
y^i \begin{bmatrix}  \dot{\partial}_1 F \\  \dot{\partial}_2 F \\ \vdots \\  \dot{\partial}_n F \end{bmatrix} -  \dot{\partial}_i F \begin{bmatrix}
y^1 \\ y^2 \\ \vdots \\ y^n \end{bmatrix},
\end{equation}
i.e. $f_i = y^i \cdot G -  \dot{\partial}_i F \cdot C$.
\end{proof}

\begin{lemm} \label{rowvector-vertcont} At a vertical contact point $v \in T_p^{\circ} M$,  $f_i(v)=0$ and  CEQ takes the form $0=\partial_i F(v)$ $(i=1, \ldots, n)$.
\end{lemm}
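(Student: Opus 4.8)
The plan is to exploit the linear-combination formula $f_i = y^i \cdot G - \dot{\partial}_i F \cdot C$ established in the proof of Lemma \ref{rowvectorspan}, together with the defining property of a vertical contact point, namely that $G_v$ and $C_v$ are parallel. This reduces both assertions to a one-line substitution.

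First I would handle the vanishing of the $f_i(v)$. At a vertical contact point both $G_v$ and $C_v$ are nonzero and parallel, so there is a scalar $\lambda$ with $G_v = \lambda C_v$, i.e. $\dot{\partial}_j F(v) = \lambda y^j$ for every $j$. Substituting this into the formula $f_i = y^i \cdot G - \dot{\partial}_i F \cdot C$ gives $f_i(v) = \lambda y^i C_v - \lambda y^i C_v = 0$; equivalently, one checks componentwise that $f_{ij}(v) = y^i \lambda y^j - y^j \lambda y^i = 0$.

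For the second assertion, I would feed the vanishing of all the $f_{ab}(v)$ into the $i$-th compatibility equation in the form \eqref{CEQ-general}. Every coefficient on the left-hand side is one of the quantities $f_{ab}(v)$, so the entire left-hand side collapses to zero regardless of the torsion components, and the equation reduces to $0 = -2\,\partial_i F(v)$, that is, $0 = \partial_i F(v)$.

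There is no real obstacle here: the statement is an immediate consequence of the previous lemma's formula and the definition of vertical contact. The only points requiring a moment's care are that the parallelism $G_v \parallel C_v$ yields a genuine scalar $\lambda$ (which it does, since $C_v \neq 0$), and the observation that the coefficients $\sigma_{ab;i}^c$ entering CEQ are built entirely from the $f_{ab}$, so that their simultaneous vanishing annihilates the left-hand side independently of the unknown torsion tensor.
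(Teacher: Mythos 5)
Your proposal is correct and follows essentially the same route as the paper: write $G_v=\lambda C_v$, substitute into the decomposition $f_i = y^i\cdot G - \dot{\partial}_i F\cdot C$ from Lemma \ref{rowvectorspan} to get $f_i(v)=0$, and then observe that all coefficients in \eqref{CEQ-general} are among the $f_{ab}(v)$, so CEQ collapses to $0=\partial_i F(v)$. No gaps.
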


\begin{proof} If $v$ is vertically contact, then $G_v = \lambda C_v$ for some nonzero $\lambda \in \rr$. Substituting into \eqref{rowvector-lincomb}, we get
\[ f_i(v) = v^i \cdot \lambda C_v - \lambda v^i \cdot C_v = 0 \hspace{1cm} (i=1, \dots, n), \]
i.e. the coordinates of $f_i$ are all zero at $v$ and the reformulation \eqref{CEQ-general} of CEQ implies the statement. 
\end{proof}

\begin{cor} In order for CEQ to have a solution, all vertically contact points must be horizontal contact.
\end{cor}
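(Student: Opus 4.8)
The plan is to read the claim off directly from Lemma~\ref{rowvector-vertcont}, whose content already contains essentially the whole corollary. The only thing to add is the elementary observation that a system of equations of the form $0 = \partial_i F(v)$, carrying no unknowns on its left-hand side, is solvable if and only if its right-hand side already vanishes.

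First I would fix an arbitrary vertical contact point $v \in T_p^{\circ} M$ and recall from Lemma~\ref{rowvector-vertcont} that there $f_i(v) = 0$ for every $i$, so every coefficient $f_{ab}(v)$ appearing in the reformulation \eqref{CEQ-general} of CEQ vanishes. Consequently, whatever values the torsion components $T_{ab}^c$ take, the left-hand side of the $i$-th compatibility equation at $v$ is identically zero, and the equation degenerates to $0 = -2\,\partial_i F(v)$.

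Finally I would conclude directly: if a family of torsion components solves CEQ throughout $T_p^{\circ} M$, then in particular it must satisfy these degenerate equations at $v$, which forces $\partial_i F(v) = 0$ for all $i = 1, \dots, n$. By the definition \eqref{H-def} of $H_v$ this means $H_v = \mathbf{0}$, i.e. $v$ is a horizontal contact point; since $v$ was an arbitrary vertical contact point, the claim follows. I expect no genuine obstacle here: the substance was already secured in the preceding lemma, and the only (trivial) point to notice is that at a vertical contact point the compatibility equations encode no information whatsoever about the unknown torsion, so their solvability collapses to the intrinsic, unknown-free condition $H_v = \mathbf{0}$.
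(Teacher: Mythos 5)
Your argument is correct and is exactly how the paper intends the corollary to follow: Lemma~\ref{rowvector-vertcont} already reduces CEQ at a vertical contact point to the unknown-free system $0=\partial_i F(v)$, and you correctly observe that solvability then forces $H_v=\mathbf{0}$. The paper gives no separate proof precisely because this one-line deduction is all that is needed.
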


\begin{lemm} \label{rowvector-notvertcont} At a not vertically contact $v \in T_p^{\circ} M$ there are indices such that $f_{ij}(v)\neq 0$ and the vectors $f_i$ and $f_j$ are linearly independent over some neighborhood $U$ of $v$ in $T_pM$. 
\end{lemm}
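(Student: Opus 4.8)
The plan is to reduce everything to the two-dimensional geometry of $\mathrm{span}(G_v, C_v)$. By Lemma \ref{rowvectorspan} we have $f_i = y^i G - \dot{\partial}_i F \cdot C$, so in the basis $\{G_v, C_v\}$ — which is genuinely a basis precisely because $v$ is not vertically contact — the vector $f_i(v)$ has coordinate pair $(y^i(v), -\dot{\partial}_i F(v))$. Consequently $f_i(v)$ and $f_j(v)$ are linearly independent if and only if the $2\times 2$ determinant $y^i \dot{\partial}_j F - y^j \dot{\partial}_i F = f_{ij}(v)$ is nonzero. This single observation already couples the two requested conditions at the point $v$ itself.

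First I would produce the indices. If $f_{ij}(v)$ vanished for every pair $i,j$, then every $f_i(v)$ would be the zero vector, i.e. $y^i(v)\, G_v = \dot{\partial}_i F(v)\, C_v$ for all $i$; choosing an index $i_0$ with $y^{i_0}(v)\neq 0$, which is possible since $v\neq \mathbf{0}$, this forces $G_v \parallel C_v$, contradicting that $v$ is not vertically contact. Hence there is a pair $i,j$ with $f_{ij}(v)\neq 0$, and by the remark above $f_i(v)$ and $f_j(v)$ are linearly independent.

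It remains to propagate this independence to a neighborhood, and this is the step I expect to carry the real weight. I would exhibit one explicit nonvanishing minor of the $n\times 2$ matrix $[\,f_i \mid f_j\,]$ that survives by continuity, rather than track the full rank condition. The natural choice is the minor formed by rows $i$ and $j$: using $f_{ii}=f_{jj}=0$ and the antisymmetry $f_{ji}=-f_{ij}$, this minor collapses to
\begin{equation*}
\det \begin{pmatrix} f_{ii} & f_{ji} \\ f_{ij} & f_{jj} \end{pmatrix} = \det \begin{pmatrix} 0 & -f_{ij} \\ f_{ij} & 0 \end{pmatrix} = f_{ij}^2 .
\end{equation*}
Because $F$ is smooth on $T_p^{\circ} M$, the scalar function $w\mapsto f_{ij}(w)^2$ is continuous and strictly positive at $v$, hence positive on some open neighborhood $U\subseteq T_pM$ of $v$. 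On $U$ the matrix $[\,f_i \mid f_j\,]$ therefore has a nonzero $2\times 2$ minor, so it has rank two throughout $U$, which is exactly the pointwise linear independence of $f_i$ and $f_j$ over $U$. The only genuine obstacle lies here, and the slick point that dissolves it is the identification of the row-$(i,j)$ minor with $f_{ij}^2$, after which continuity of a single function finishes the argument.
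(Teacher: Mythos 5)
Your proposal is correct and follows essentially the same route as the paper: existence of a pair with $f_{ij}(v)\neq 0$ by contradiction from the decomposition $f_i=y^iG-\dot{\partial}_iF\cdot C$, and linear independence on a neighborhood via the $2\times 2$ minor in rows/columns $i,j$ (whose value $f_{ij}^2$ you make explicit) together with continuity. No gaps.
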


\begin{proof} Suppose that all the $f_{ij}(v)$, and consequently, all the vectors $f_i(v)$ are zero. Since $v$ is not vertically contact, $G_v$ and $C_v$ are linearly independent. In particular, neither of them is the zero vector, so one of their coordinates is nonzero, meaning that for some index $i$, \eqref{rowvector-lincomb} gives the zero vector as a linear combination of the independent vectors $C_v$ and $G_v$ with nonzero coefficients. This is a contradiction, so there must be an $f_{ij}(v)$, and thus two vectors $f_i(v)$ and $f_j(v)$ different from zero. Since the matrix
\[ \begin{bmatrix} f_i \\ f_j \end{bmatrix} =
\begin{bmatrix} f_{i1} & \dots & 0 & \dots & f_{ij} & \dots & f_{in} \\ f_{j1} & \dots & f_{ji} & \dots & 0 & \dots & f_{jn}\\ \end{bmatrix} \]
has rank 2 at $v$ (choose the $i$-th and $j$-th columns), they are linearly independent at $v$ and the same is true at the points of some adequately small neighborhood of $v$ in  $T_pM$ by a continuity argument. 
\end{proof}

\begin{cor} \label{rowvector-ultimate} At a point $v \in T_p^{\circ} M$ the vectors defined by \eqref{rowvector} span the subspace
\[ \mathrm{span}(f_1(v), \dots, f_n(v)) = \left\{\begin{array}{cl}
   \{ \bm{0} \} & \text{if} \ v \ \text{is vertically contact}, \\
   \mathrm{span}(G_v, C_v) &  \text{if} \ v \ \text{is not vertically contact}.
 \end{array}\right. \]
\end{cor}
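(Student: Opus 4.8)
The plan is to prove the two cases of the corollary separately, assembling the three lemmas already established. Each one contributes a single ingredient: Lemma \ref{rowvectorspan} provides the upper bound on the span, Lemma \ref{rowvector-vertcont} settles the degenerate case, and Lemma \ref{rowvector-notvertcont} supplies the matching lower bound in the generic case. So the whole argument is a case distinction on whether $v$ is vertically contact, followed by a short dimension count.

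For the vertically contact case I would simply invoke Lemma \ref{rowvector-vertcont}: it asserts $f_i(v)=0$ for every $i=1, \dots, n$, hence the spanned subspace is trivially $\{\bm{0}\}$, which is precisely the first alternative in the statement.

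For the not vertically contact case I would combine the two remaining lemmas. By definition, $v$ not vertically contact means $G_v$ and $C_v$ are linearly independent, so $\mathrm{span}(G_v, C_v)$ is two-dimensional. Lemma \ref{rowvectorspan} gives the inclusion $\mathrm{span}(f_1(v), \dots, f_n(v)) \subseteq \mathrm{span}(G_v, C_v)$, bounding the dimension of the span from above by two; conversely, Lemma \ref{rowvector-notvertcont} produces indices $i,j$ with $f_i(v)$ and $f_j(v)$ linearly independent, bounding it from below by two. Squeezing between these bounds forces equality of the two subspaces, which is the second alternative.

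The only real obstacle is conceptual rather than computational: one must notice that the reverse inclusion in the generic case is exactly what the independence statement of Lemma \ref{rowvector-notvertcont} delivers, once the target space $\mathrm{span}(G_v, C_v)$ is known to have dimension two. Everything else is the mechanical assembly of already-proven facts, so no further calculation with the explicit decomposition $f_i = y^i \cdot G - \dot{\partial}_i F \cdot C$ is required.
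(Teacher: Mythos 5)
Your proposal is correct and follows essentially the same route as the paper's own proof: the inclusion from Lemma \ref{rowvectorspan}, the vanishing of all $f_i(v)$ at vertically contact points from Lemma \ref{rowvector-vertcont}, and the two independent vectors from Lemma \ref{rowvector-notvertcont} generating all of $\mathrm{span}(G_v, C_v)$. Your explicit dimension count just spells out what the paper leaves implicit.
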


\begin{proof} For any $v \in T_p^{\circ} M$, we have $\mathrm{span}(f_1(v), \dots, f_n(v)) \subseteq \mathrm{span}(G_v, C_v)$ by Lemma \ref{rowvectorspan}. If $v$ is vertically contact, all the vectors $f_i(v)$ are zero according to Lemma \ref{rowvector-vertcont}. If not, there are 2 independent vectors among them according to Lemma \ref{rowvector-notvertcont}, thus generating the whole $\mathrm{span}(G_v, C_v)$.
\end{proof}

\begin{lemm} \label{rowvector-basis} At a not vertically contact $v \in T_p^{\circ} M$, let us choose indices $i \neq j$ such that $f_{ij}(v)\neq 0$. Then $(f_i, f_j)$ is a basis of $\mathrm{span}(G, C)$  over some neighborhood $U$ of $v$ in $T_pM$ and
\begin{equation}\label{rowvector-lincomb2}
f_k = \dfrac{f_{kj}}{f_{ij}} \cdot f_i + \dfrac{f_{ik}}{f_{ij}} \cdot  f_j \hspace{1cm} (k=1, \dots, n)
\end{equation}
at any point of $U$.
\end{lemm}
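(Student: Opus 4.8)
The plan is to split the statement into its two assertions and handle them in order: first that $(f_i, f_j)$ is a basis of $\mathrm{span}(G,C)$ over a suitable neighborhood $U$, and then to read off the expansion \eqref{rowvector-lincomb2} of an arbitrary $f_k$. For the basis part I would start from Lemma \ref{rowvector-notvertcont}, which for the chosen indices with $f_{ij}(v)\neq 0$ already yields linear independence of $f_i$ and $f_j$ on some neighborhood of $v$. Being not vertically contact is an open condition (it means $G$ and $C$ are independent, equivalently some $f_{ab}\neq 0$, and here $f_{ij}$ is nonzero near $v$), so I would shrink $U$ if necessary so that every point of $U$ is not vertically contact and $f_{ij}$ stays nonzero there. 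At each such point Corollary \ref{rowvector-ultimate} gives $\mathrm{span}(f_1,\dots,f_n)=\mathrm{span}(G,C)$, which is two-dimensional, and $f_i,f_j$ lie in it by Lemma \ref{rowvectorspan}; two independent vectors in a two-dimensional space form a basis, so $(f_i,f_j)$ is a basis of $\mathrm{span}(G,C)$ at every point of $U$.

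For the expansion formula I would avoid computing with $G$ and $C$ and argue coordinatewise. By Corollary \ref{rowvector-ultimate} each $f_k$ lies in $\mathrm{span}(G,C)=\mathrm{span}(f_i,f_j)$, so there are unique scalars $\alpha,\beta$ with $f_k=\alpha f_i+\beta f_j$. Comparing the $j$-th coordinates of both sides and using the antisymmetry $f_{jj}=0$ gives $f_{kj}=\alpha f_{ij}$, hence $\alpha=f_{kj}/f_{ij}$; comparing the $i$-th coordinates, using $f_{ii}=0$ and $f_{ji}=-f_{ij}$, gives $f_{ki}=\beta f_{ji}$, hence $\beta=f_{ik}/f_{ij}$. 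These are exactly the coefficients in \eqref{rowvector-lincomb2}. As a cross-check one can instead substitute $f_k=y^k G-\dot{\partial}_k F\cdot C$ from \eqref{rowvector-lincomb} and verify the identity $f_{kj}f_i+f_{ik}f_j=f_{ij}f_k$ directly; the coefficients of $G$ and of $C$ both collapse after cancellation to $y^k f_{ij}$ and $\dot{\partial}_k F\,f_{ij}$ respectively, giving the same relation.

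I expect no deep obstacle here, since the algebra is routine; the only point that genuinely requires care is the first stage, namely guaranteeing that $U$ can be taken small enough that the decomposition stays valid at \emph{every} point of $U$ and not merely at $v$. Concretely, one must ensure both that the non-vertical-contact condition persists and that $f_{ij}\neq 0$ throughout $U$, so that the denominators in \eqref{rowvector-lincomb2} make sense and $\mathrm{span}(G,C)$ keeps dimension two. Once these open conditions are secured, the uniqueness of the expansion in the basis $(f_i,f_j)$ together with the coordinatewise extraction completes the argument.
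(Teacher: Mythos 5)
Your proposal is correct. The first half coincides with the paper's: the paper also simply invokes Lemma \ref{rowvector-notvertcont} to get the basis property of $(f_i,f_j)$ over $U$ (your extra care that $f_{ij}\neq 0$ must persist on all of $U$, not just linear independence, is a worthwhile refinement, since the denominators in \eqref{rowvector-lincomb2} require the specific minor $f_{ij}^2$ to be nonzero rather than merely \emph{some} $2\times 2$ minor). Where you diverge is in extracting the coefficients of $f_k=\lambda_1 f_i+\lambda_2 f_j$: the paper works in the basis $(G,C)$, writing each $f_\ell = y^\ell G - \dot{\partial}_\ell F\cdot C$ and inverting the $2\times 2$ transition matrix $\left[\begin{smallmatrix} y^i & y^j \\ \dot{\partial}_i F & \dot{\partial}_j F\end{smallmatrix}\right]$, whose determinant is exactly $f_{ij}$; you instead compare the $i$-th and $j$-th entries of the $n$-vectors directly, using $f_{ii}=f_{jj}=0$ and $f_{ji}=-f_{ij}$. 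Both computations are a few lines and both are valid (your cross-check identity $f_{kj}f_i+f_{ik}f_j=f_{ij}f_k$ does hold, as the $G$- and $C$-coefficients each collapse to $y^k f_{ij}$ and $\dot{\partial}_k F\, f_{ij}$). The paper's route makes transparent why $f_{ij}$ appears as the denominator (it is the determinant of the change of basis from $(G,C)$ to $(f_i,f_j)$), while yours is slightly more elementary and bypasses $G$ and $C$ entirely once the basis property is secured.
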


\begin{proof} By Lemma \ref{rowvector-notvertcont}, we know that $(f_i, f_j)$ is a basis of $\mathrm{span}(G, C)$ at the points of $U$. Let us choose an index $k \in \{ 1, \dots, n\}$ and write $f_k = \lambda_1 f_i + \lambda_2 f_j$. By \eqref{rowvector-lincomb}, we can write that
\[f_k= y^k \cdot G - \dot{\partial}_k F \cdot C =  \lambda_1 \left( y^i \cdot G - \dot{\partial}_i F \cdot C \right) + \lambda_2 \left( y^j \cdot G - \dot{\partial}_j F \cdot C \right). \]
By comparing the coefficients in the basis $(G, C)$,
\[ \begin{bmatrix} y^k \\ \dot{\partial}_k F \end{bmatrix} = \begin{bmatrix} y^i & y^j \\ \dot{\partial}_i F & \dot{\partial}_j F \end{bmatrix} \cdot \begin{bmatrix} \lambda_1 \\ \lambda_2
\end{bmatrix} \]
and, consequently, 
\[ \begin{bmatrix} \lambda_1 \\ \lambda_2
\end{bmatrix} = \dfrac{1}{f_{ij}} \begin{bmatrix} \dot{\partial}_j F & -y^j \\ -\dot{\partial}_i F & y^i \end{bmatrix} \cdot \begin{bmatrix} y^k \\ \dot{\partial}_k F \end{bmatrix} = \dfrac{1}{f_{ij}} \begin{bmatrix} y^k \dot{\partial}_j F - y^j \dot{\partial}_k F \\ y^i \dot{\partial}_k F - y^k \dot{\partial}_i F \end{bmatrix}. \qedhere \]
\end{proof}

\section{The semi-symmetric compatible linear connection and its unicity}

Although there are some theoretical results for the solvability of the compatibility equations  (monochromatic Finsler metrics \cite{BM}, extremal compatible linear connections and algorithmic solutions \cite{V14}), it is very hard to solve in general because compatible linear connections may or may not exist on a Finsler manifold and may or may not be unique.  Therefore special cases are of special interest. One of them is the case of the so-called semi-symmetric compatible linear connection with decomposable torsion tensor. 

\begin{defi} A linear connection is called \textbf{semi-symmetric} if its torsion tensor can be written as
\begin{equation}\label{semi-symm-tors}
T(X,Y) = \rho(Y) X - \rho(X)Y
\end{equation}
for some differential 1-form $\rho$ on the base manifold.
\end{defi}

It is proved \cite{V10} that a semi-symmetric compatible linear connection must be uniquely determined.  

\begin{thm}\label{original} \emph{\cite{V10}} A non-Riemannian Finsler manifold admits at most one semi-symmetric compatible linear connection.
\end{thm}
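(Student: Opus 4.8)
The plan is to convert the semi-symmetry constraint into a parametrization of the torsion by the components of the $1$-form $\rho$ and then to extract uniqueness from the homogeneous system produced by CEQ. Writing $\rho = \rho_i\, du^i$, the decomposition \eqref{semi-symm-tors} gives the torsion components $T^l_{jk} = \rho_k \delta^l_j - \rho_j \delta^l_k$. Since the averaged Riemannian metric $\gamma$ is built from $F$ alone and every compatible linear connection is metric with respect to it, a compatible metric connection is determined by its torsion; for the semi-symmetric case this means it is completely determined by $\rho$. Hence it is enough to prove that $\rho$ is uniquely determined at each point $p\in M$, and to this end I would work in normal coordinates at $p$ so that the reduced equations \eqref{CEQ-general} are available.

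First I would substitute the components $T^l_{jk}$ into \eqref{CEQ-general}. A short computation shows that the whole second (primed) sum vanishes: for $a,b\neq i$ and $a\neq b$, each of $T^a_{ib}$, $T^i_{ab}$, $T^b_{ai}$ is a combination of Kronecker deltas all of whose arguments are distinct, hence zero. In the first sum $T^i_{ia}=\rho_a$ for $a\neq i$, so, using $f_{ii}=0$, the $i$-th equation collapses to
\[ \sum_{a} f_{ia}(v)\,\rho_a = -\partial_i F(v) \qquad (i=1,\dots,n), \]
that is $f_i(v)\cdot\rho = -\partial_i F(v)$ for every index $i$ and every $v\in T_p^{\circ}M$, where $f_i$ is the vector field \eqref{rowvector}. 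The decisive structural feature is that the unknowns $\rho_a=\rho_a(p)$ are \emph{constant} along the fibre while $v$ ranges over all of $T_p^{\circ}M$; this strong over-determination is what will force uniqueness.

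Next I would pass to the difference $\bar\rho := \rho - \rho'$ of two solutions, which satisfies the homogeneous system $f_i(v)\cdot\bar\rho = 0$ for all $i$ and all $v\in T_p^{\circ}M$, i.e. $\bar\rho$ is orthogonal to $\mathrm{span}(f_1(v),\dots,f_n(v))$ for every $v$. By Corollary \ref{rowvector-ultimate}, at every not vertically contact $v$ this span equals $\mathrm{span}(G_v,C_v)$, which contains the position vector $C_v=[y^1,\dots,y^n]=v$; since $\gamma_{ij}(p)=\delta_{ij}$ in normal coordinates, orthogonality here is the ordinary Euclidean one, so $\bar\rho\perp v$ for every not vertically contact $v\in T_p^{\circ}M$.

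Finally I would invoke the non-Riemannian hypothesis. For a generalized Berwald manifold a quadratic indicatrix at a single point forces quadratic indicatrices at every point, so a non-Riemannian such manifold admits no vertical contact tangent space at all; consequently the not vertically contact elements of $T_pM$ form a non-empty open cone and therefore span $\mathbb{R}^n$. Being orthogonal to all of them, $\bar\rho(p)=0$, and as $p$ was arbitrary we get $\rho=\rho'$, so the two connections coincide. The main obstacle I anticipate is not the linear algebra but the careful justification of this last step: one must guarantee, via the quadratic-at-one-point principle, that not vertically contact directions actually exist at \emph{every} point of a non-Riemannian generalized Berwald manifold, and observe that an open set of such directions already spans the tangent space, so that orthogonality to the position vectors $C_v$ alone annihilates $\bar\rho$.
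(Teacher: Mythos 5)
Your proof is correct and follows essentially the same route as the paper: the same substitution of the semi-symmetric torsion into \eqref{CEQ-general} yielding the system $\dotprod{f_i(v),\rho}=-\partial_i F(v)$, the same reduction to the homogeneous system analyzed via Corollary \ref{rowvector-ultimate}, and the same dichotomy between a vertically contact tangent space (forcing the Riemannian case) and the existence of an open set of not vertically contact elements. Your final step --- that the position vectors $C_v=v$ over a non-empty open cone span $T_pM$ and hence annihilate the difference of two solutions --- is exactly the paper's observation that $\bigcap_{v\in U}\mathcal{LR}_v=\{\bm{0}\}$ for any open set $U$ of such elements.
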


The original  proof is based on averaging in the sense that the 1-form $\rho$  can be expressed by integrating differential forms on the tangent manifold over the Finslerian indicatrices. The integral formulas are very difficult to compute in practice. In what follows we present a new proof for the unicity result by using linear algebra and some basic facts about convex bodies. We present an explicit formula for the solution without integration. The method has a new contribution to the problem as well:  necessary conditions of the solvability are formulated in terms of intrinsic equations without unknown quantities. They are sufficient if and only if the solution depends only on the position.  

\subsection{The proof of Theorem \ref{original}} Since 
$$T(\partial/\partial u^i, \partial/\partial u^j) =  \rho(\partial/\partial u^j) \partial/\partial u^i - \rho(\partial/\partial u^i) \partial/\partial u^j=$$
$$= \rho_j \partial/\partial u^i - \rho_i \partial/\partial u^j = \left( \delta_i^k \rho_j - \delta_j^k \rho_i \right) \partial/\partial u^k,$$
the torsion components are
\begin{equation} \label{semi-symm-torscomp}
T_{ij}^k = \delta_i^k \rho_j - \delta_j^k \rho_i.
\end{equation}
In particular, all torsion components with 3 different indices are zero, and 
\[ T_{ij}^i = \delta_i^i \rho_j - \delta_j^i \rho_i =\rho_j \hspace{1cm }(j \neq i). \]
Substituting the torsion components into the general form \eqref{CEQ-general} of CEQ at a point $p$, it takes the (matrix) form
\begin{equation} \label{CEQ}
{\tabulinesep=2pt
\begin{tabu} to .5\textwidth {|ccccc|c|}
\hline 
\rho_1 & \rho_2 & \rho_3 & \cdots & \rho_n & \text{RHS} \\ 
\hline 
0 & f_{12} & f_{13} & \cdots & f_{1n} & -\partial_1 F \\ 
f_{21} & 0 & f_{23} & \cdots & f_{2n} & -\partial_2 F \\ 
f_{31} & f_{32} & 0 & \cdots & f_{3n} & -\partial_3 F \\
\vdots & \vdots & \vdots & \ddots & \vdots & \vdots \\ 
f_{n1} & f_{n2} & f_{n3} & \cdots & 0 & -\partial_n F \\
\hline 
\end{tabu}} 
\end{equation}
The problem is to solve \eqref{CEQ} for $\rho_1, \dots, \rho_n$, considered as the coordinates of a vector $\rho \in T_p M$, as $v$ ranges over $T_p^{\circ} M$. To prove Theorem \ref{original} it is enough to consider the homogeneous version  H-CEQ with vanishing right hand side of  \eqref{CEQ}. We are going to verify that the only solution of H-CEQ is $\rho_1=\dots=\rho_n=0$. Since the rows of the matrix on the left hand side are exactly the vectors $f_1, \dots, f_n$ defined in \eqref{rowvector}, solving H-CEQ at a fixed element $v$ means finding the orthogonal complement of $\mathrm{span}(f_1(v), \dots, f_n(v))$. By Corollary \ref{rowvector-ultimate},
\begin{itemize}
\item it is $T_p M$ for any vertically contact element $v$,
\item  it is the orthogonal complement of $\mathrm{span}(G_v, C_v)$ for any not vertically contact element $v$, i.e. the intersection $\mathcal{LF}_v \cap \mathcal{LR}_v$ of the linear tangent hyperplanes of the Finslerian and Riemannian spheres. 
\end{itemize}
The solution of H-CEQ at the point $p$ is the intersection of all the solution spaces as the element $v$ ranges over $T_p^{\circ} M$. Note that the homogeneity of the coefficients imply that it is enough to consider the intersection of all the solution spaces as the element $v$ ranges over the Finslerian (or the Riemannian) unit sphere. 
\begin{itemize}
\item If all the elements of $T_p M$ are vertically contact and the Finsler manifold admits a compatible (semi-symmetric) linear connection $\nabla$, then it is a Riemannian manifold because the linear isometries via the parallel transports with respect to $\nabla$ extend the quadratic Finslerian (esp. Riemannian) indicatrix at the point $p$ to the entire (connected) manifold. 
\item If there is a not vertically contact element $v$, then, by a continuity argument, we can consider a neighborhood $U\subseteq T_pM$ containing not vertically contact elements. For the solution vector $\rho$ we have
\[ \rho \in \bigcap_{v \in U} \left( \mathcal{LF}_v \cap \mathcal{LR}_v \right) \subseteq \big(\bigcap_{v \in U} \mathcal{LF}_v \big) \cap \big(\bigcap_{v \in U} \mathcal{LR}_v \big). \]
It is clear that  the right hand side contains only the zero vector because the normal vectors at the points of any open set on the boundary of a Euclidean sphere (or any smooth strictly convex body) span the entire space. Therefore $\rho=0$ is the only solution of H-CEQ at $p$ and, consequently, CEQ admits at most one solution for the components of the torsion tensor of a semi-symmetric linear connection point by point. \qedhere
\end{itemize}

\subsection{Intrinsic equations and $v$-solvability of CEQ} In this section we investigate \eqref{CEQ} evaluated at non-zero tangent vectors in $T_pM$:

\begin{equation} \label{v-CEQ}
{\tabulinesep=2pt
\begin{tabu} to .5\textwidth {|ccccc|c|}
\hline 
\rho_1 & \rho_2 & \rho_3 & \cdots & \rho_n & \text{RHS} \\ 
\hline 
0 & f_{12}(v) & f_{13}(v) & \cdots & f_{1n}(v) & -\partial_1 F(v) \\ 
f_{21}(v) & 0 & f_{23}(v) & \cdots & f_{2n}(v) & -\partial_2 F(v) \\ 
f_{31} (v)& f_{32}(v) & 0 & \cdots & f_{3n}(v) & -\partial_3 F(v) \\
\vdots & \vdots & \vdots & \ddots & \vdots & \vdots \\ 
f_{n1} (v)& f_{n2}(v) & f_{n3} (v)& \cdots & 0 & -\partial_n F(v) \\
\hline 
\end{tabu}} 
\end{equation} 

\begin{defi} The system of the compatibility equations is called \textbf{$v$-solvable} for $\rho$ at the point $p\in M$ if \eqref{v-CEQ} is solvable for any non-zero element $v\in T_pM$.
\end{defi}

\begin{rem}
The system of the compatibility equations is $v$-solvable if and only if all vertical contact vectors are horizontal contact. It is an obvious necessary condition because the coefficient matrix of CEQ is zero at a vertically contact point and the system must be homogeneous with vanishing horizontal derivatives of the Finsler metric with respect to the compatible linear connection. The sufficiency is based on the idea of the extremal compatible linear connection \cite{V14}. The extremal solution typically depends on the reference element $v\in T_pM$  but does not take a decomposable form in general. Therefore $v$-solvability for $\rho$ needs additional conditions. Using Corollary \ref{rowvector-ultimate} and basic linear algebra we can formulate the following characterizations of $v$-solvability in case of semi-symmetric compatible linear connections. 
\end{rem}

\begin{lemm} \label{v-solvability1}  The system of the compatibility equations is $v$-solvable for $\rho$ at the point $p\in M$ if and only if the following are satisfied:
\begin{itemize}
\item all vertically contact elements are also horizontal contact in $T_pM$ and
\item the rank of the augmented matrix of system \eqref{v-CEQ} is 2  at all not vertically contact elements $v\in T_pM$, i.e. $H_v \in \mathrm{span}(f_1(v), \dots, f_n(v))$, where the vector $H_v$ is defined by formula \eqref{H-def}.
\end{itemize}
\end{lemm}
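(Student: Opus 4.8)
The plan is to recognize $v$-solvability as nothing more than the consistency of a finite-dimensional linear system and then read off the relevant ranks directly from Corollary \ref{rowvector-ultimate}. Fixing a non-zero $v \in T_p^{\circ}M$, I would write the system \eqref{v-CEQ} in the form $A(v)\rho = -H_v$, where $A(v)$ is the $n \times n$ matrix whose $i$-th row is $f_i(v)^T$ (the rows being exactly the vectors of \eqref{rowvector}) and $H_v$ is the vector of \eqref{H-def}. By the Rouch\'{e}--Capelli theorem the system is solvable if and only if the augmented matrix $[A(v) \mid -H_v]$ and the coefficient matrix $A(v)$ have the same rank. Since $v$-solvability asks for this at every non-zero $v$, it suffices to split the verification into the two cases distinguished in Corollary \ref{rowvector-ultimate}.

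First I would dispose of the vertically contact elements. If $v$ is vertically contact, then by Corollary \ref{rowvector-ultimate} (or directly Lemma \ref{rowvector-vertcont}) we have $f_1(v) = \dots = f_n(v) = 0$, so $A(v)$ is the zero matrix and the system collapses to $0 = -H_v$. Hence it is solvable precisely when $H_v = 0$, that is, when $v$ is also horizontal contact. This is exactly the content of the first bullet of the statement.

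Next I would treat a not vertically contact $v$. By Corollary \ref{rowvector-ultimate} the span $\mathrm{span}(f_1(v), \dots, f_n(v)) = \mathrm{span}(G_v, C_v)$ is two-dimensional, so $A(v)$ has rank $2$; adjoining the single column $-H_v$ can only keep the rank at $2$ or raise it to $3$. By Rouch\'{e}--Capelli the system is solvable if and only if the augmented rank remains $2$, which is precisely the stated rank condition. It then remains to identify this with the membership $H_v \in \mathrm{span}(f_1(v), \dots, f_n(v))$. Here the key observation is that $A(v)$ is antisymmetric: formula \eqref{CEQ-coeff2} gives $f_{ji} = -f_{ij}$, so $A(v)^T = -A(v)$, whence the column space of $A(v)$ coincides with its row space $\mathrm{span}(f_1(v), \dots, f_n(v))$. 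The augmented matrix therefore has rank $2$ exactly when $-H_v$, and hence $H_v$ (a subspace being closed under scalar multiples), lies in this span.

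Combining the two cases, system \eqref{v-CEQ} is solvable for every non-zero $v$ --- that is, CEQ is $v$-solvable for $\rho$ at $p$ --- if and only if both bullets hold. I expect the only genuinely delicate point to be the last one: translating the purely numerical solvability condition at not vertically contact elements into the geometric membership $H_v \in \mathrm{span}(f_1(v), \dots, f_n(v))$. This step relies essentially on the antisymmetry $f_{ji} = -f_{ij}$, which is what lets us replace a statement about the column space of $A(v)$ by one about the explicitly understood span of the $f_i(v)$ furnished by Corollary \ref{rowvector-ultimate}.
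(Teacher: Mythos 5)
Your proof is correct and is essentially the argument the paper leaves implicit: the lemma is stated without a written proof, the preceding remark indicating only that it follows from Corollary \ref{rowvector-ultimate} and basic linear algebra, which is exactly the Rouch\'{e}--Capelli case analysis you carry out. Your observation that the antisymmetry $f_{ji}=-f_{ij}$ identifies the column space of the coefficient matrix with $\mathrm{span}(f_1(v),\dots,f_n(v))$ is the right way to justify the ``i.e.'' in the second bullet, and fills in the one step the paper glosses over.
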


\begin{prop} \label{v-solvability2} The system of the compatibility equations is $v$-solvable for $\rho$ at the point $p\in M$ if and only if the following are satisfied:
\begin{itemize}
\item all vertically contact elements are also horizontal contact  in $T_pM$  and
\item for any triplets of distinct indices $i,j,k$ we have
\begin{equation} \label{solv-symm}
f_{ij}(v) \, \partial_k F (v)+ f_{jk}(v) \, \partial_i F (v)+ f_{ki}(v) \, \partial_j F(v) = 0
\end{equation}
provided that  $f_{ij}(v)\neq 0$.
\end{itemize}
\end{prop}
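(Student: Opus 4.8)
The plan is to reduce the statement to Lemma \ref{v-solvability1} and then translate its second, purely linear-algebraic condition into the explicit intrinsic equations \eqref{solv-symm}. Since the first bullet (all vertically contact elements are horizontal contact) is literally the same in both formulations, it suffices to prove, at each not vertically contact element $v \in T_p^{\circ} M$, the equivalence between $H_v \in \mathrm{span}(f_1(v), \dots, f_n(v))$ and the validity of \eqref{solv-symm} for every triplet of distinct indices with $f_{ij}(v) \neq 0$.

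First I would record that, by Corollary \ref{rowvector-ultimate}, at a not vertically contact $v$ we have $\mathrm{span}(f_1(v), \dots, f_n(v)) = \mathrm{span}(G_v, C_v)$, where $G_v$ and $C_v$ are linearly independent and span a plane. Consequently $H_v$ lies in this span if and only if the three row vectors $H_v$, $C_v$, $G_v$ are linearly dependent, i.e. the $3 \times n$ matrix
\[ B_v := \begin{bmatrix} H_v \\ C_v \\ G_v \end{bmatrix} = \begin{bmatrix} \partial_1 F(v) & \cdots & \partial_n F(v) \\ y^1 & \cdots & y^n \\ \dot{\partial}_1 F(v) & \cdots & \dot{\partial}_n F(v) \end{bmatrix} \]
has rank at most $2$, which is equivalent to the vanishing of all of its $3 \times 3$ minors.

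The key computation is to expand the $3 \times 3$ minor of $B_v$ formed by the columns $i, j, k$ along its first row. Using $f_{ab} = y^a \dot{\partial}_b F - y^b \dot{\partial}_a F$, the three $2 \times 2$ cofactors are exactly $f_{jk}$, $f_{ik}$ and $f_{ij}$, so the minor equals
\[ \partial_i F \, f_{jk} - \partial_j F \, f_{ik} + \partial_k F \, f_{ij} = f_{ij}\, \partial_k F + f_{jk}\, \partial_i F + f_{ki}\, \partial_j F, \]
which is precisely the left hand side of \eqref{solv-symm}. Thus each equation \eqref{solv-symm} is the vanishing of one $3\times 3$ minor of $B_v$, and the condition $H_v \in \mathrm{span}(f_1(v), \dots, f_n(v))$ is equivalent to \eqref{solv-symm} holding for all triplets of distinct indices.

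The only remaining point — the one that needs a little care — is the restriction ``provided that $f_{ij}(v) \neq 0$'' in the statement. The expression in \eqref{solv-symm} is invariant, up to an overall sign that does not affect the equation, under permutations of $(i,j,k)$, so requiring it whenever $f_{ij}(v)\neq 0$ amounts to requiring it for every triplet in which at least one of $f_{ij}(v), f_{jk}(v), f_{ki}(v)$ is nonzero. If instead all three of these vanish, then \eqref{solv-symm} reduces to the trivial identity $0=0$; such triplets correspond to automatically vanishing minors of $B_v$ and impose no condition. Hence demanding \eqref{solv-symm} only for the triplets with $f_{ij}(v)\neq 0$ is equivalent to demanding the vanishing of all $3\times 3$ minors of $B_v$, and combined with the unchanged first bullet this yields the claimed characterization of $v$-solvability.
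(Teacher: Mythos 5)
Your proof is correct. It follows the same overall strategy as the paper---reducing Proposition \ref{v-solvability2} to Lemma \ref{v-solvability1} and checking, at each not vertically contact $v$, when $H_v$ lies in $\mathrm{span}(f_1(v),\dots,f_n(v))=\mathrm{span}(G_v,C_v)$---but the key computation is carried out differently. The paper picks $i\neq j$ with $f_{ij}(v)\neq 0$, uses the explicit decomposition $f_k=\frac{f_{kj}}{f_{ij}}f_i+\frac{f_{ik}}{f_{ij}}f_j$ of Lemma \ref{rowvector-basis} to eliminate the $k$-th row of the augmented system, and reads off \eqref{solv-symm} as the consistency condition forced on the right-hand side. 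You instead encode the membership $H_v\in\mathrm{span}(G_v,C_v)$ as the vanishing of all $3\times 3$ minors of the matrix with rows $H_v$, $C_v$, $G_v$, and identify each such minor, by cofactor expansion along the first row, with the left-hand side of \eqref{solv-symm}; your sign bookkeeping ($\partial_i F\,f_{jk}-\partial_j F\,f_{ik}+\partial_k F\,f_{ij}$) is right. The two computations are essentially Cramer's rule versus cofactor expansion of the same determinant, but your packaging has two small advantages: it bypasses Lemma \ref{rowvector-basis} entirely, and it makes the cyclic invariance of \eqref{solv-symm} --- and hence the harmlessness of the proviso $f_{ij}(v)\neq 0$ --- completely transparent, a point the paper's proof leaves implicit when it ``interchanges the indices in $f_{kj}$ and $f_{ik}$'' at the end. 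Your treatment of the degenerate triplets with $f_{ij}=f_{jk}=f_{ki}=0$ is also correct: the corresponding minors vanish identically, so they impose no condition, and at a vertically contact $v$ the second bullet is vacuous for the same reason, consistent with your restriction of the argument to not vertically contact elements.
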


\begin{proof} If $v$ is vertically contact, \eqref{solv-symm} stands trivially. Otherwise we are going to show that it is equivalent to the augmented matrix having rank 2. Suppose that $v$ is not vertically contact and choose indices $i\neq j$ such that $f_{ij}(v)\neq 0$ and $(f_i(v), f_j(v))$ is a basis of $\mathrm{span}(G_v, C_v)$. By Lemma \ref{rowvector-basis}, if $k\neq i, j$ then 
\[ f_k = \dfrac{f_{kj}}{f_{ij}} \cdot f_i + \dfrac{f_{ik}}{f_{ij}} \cdot  f_j \hspace{1cm} (k=1, \dots, n). \]
In other words we can eliminate the $k$-th row for any $k\neq i, j$. The elimination must also yield zeroes on the right-hand side of \eqref{v-CEQ} to have a solution, i.e. for $k \neq i, j$ we must have
\[ \begin{array}{c}
-\partial_k F - \dfrac{f_{kj}}{f_{ij}} \cdot (-\partial_i F) - \dfrac{f_{ik}}{f_{ij}} \cdot (-\partial_j F) = 0 \\[12pt]
f_{ij} \, \partial_k F - f_{kj} \, \partial_i F - f_{ik} \, \partial_j F = 0.
\end{array} \]
Equation \eqref{solv-symm} follows by interchanging the indices in $f_{kj}$ and $f_{ik}$. Since the coefficient matrix is of maximal rank $2$, the augmented matrix is of maximal rank $2$ as the extension of the coefficient matrix with $-\partial_i F$ and $-\partial_j F$ in the corresponding rows.   
\end{proof}

\begin{rem} Equations \eqref{solv-symm} do not contain unknown quantities. They are intrinsic conditions of the solvability. Taking $f_{ij}(v)\neq 0$ for some fixed indices $i$ and $j$ at a not vertically contact element, they provide $n-2$ equations to be automatically satisfied because $k=1, \ldots, n$, but $k\neq i, j$. The missing equations are  
\begin{equation}  \label{CEQ-elim}
\left.\begin{array}{rcl}
 \dotprod{f_i,\rho} & = & -\partial_i F \\[4pt]
 \dotprod{f_j,\rho} & = & -\partial_j F \\
 \end{array}\right\},
\end{equation}
where  $\dotprod{f_i,\rho}$ and $ \dotprod{f_j,\rho}$ stand for the inner product at $p\in M$ coming from the Riemannian metric $\gamma$. They provide the only possible solution $\rho$  in an explicit form. 
\end{rem}

\subsection{The only possible solution of CEQ at the point $p\in M$} \label{lastsect} Recall that if the tangent space at $p\in M$ contains only vertically contact non-zero elements (vertically contact tangent space)  and the Finsler manifold admits a compatible (semi-symmetric) linear connection $\nabla$, then it is a Riemannian manifold because the linear isometries via the parallel transports with respect to $\nabla$ extend the quadratic Finslerian (esp. Riemannian) indicatrix at the point $p$ to the entire (connected) manifold. Therefore we present the solution of CEQ in the generic case of non-Riemannian Finsler manifolds. Let us choose a not vertically contact element $v \in T_p^{\circ} M$ and indices $i\neq j$ with $f_{ij}(v)\neq 0$, i.e. $f_i, f_j$ are linearly independent over some neighborhood $U$ of $v$ in $T_pM$. Using \eqref{rowvector-lincomb}, the eliminated form \eqref{CEQ-elim} of CEQ gives that 
\[  {\arraycolsep=1pt \left.\begin{array}{ccccl}
 y^i \dotprod{G,\rho} & - & \dot{\partial}_i F \dotprod{C,\rho} & = & -\partial_i F \\[5pt]
 y^j \dotprod{G,\rho} & - & \dot{\partial}_j F \dotprod{C,\rho} & = & -\partial_j F \\
 \end{array}\right\} } \Longleftrightarrow
\begin{bmatrix} y^i & -\dot{\partial}_i F \\[5pt] y^j & -\dot{\partial}_j F  \end{bmatrix} \cdot \begin{bmatrix} \dotprod{G,\rho} \\[5pt] \dotprod{C,\rho} \end{bmatrix} = \begin{bmatrix}
-\partial_i F \\[5pt] -\partial_j F
\end{bmatrix}, \]
and, consequently,
\[ \begin{bmatrix} \dotprod{G,\rho} \\[5pt] \dotprod{C,\rho} \end{bmatrix} = \dfrac{1}{f_{ji}}\begin{bmatrix}
-\dot{\partial}_j F &\dot{\partial}_i F \\[5pt] -y^j & y^i
\end{bmatrix} \cdot \begin{bmatrix}
-\partial_i F \\[5pt] -\partial_j F
\end{bmatrix}. \]
We are going to concentrate on the second row
\begin{equation} \label{eq-c}
\dotprod{C,\rho} = \dfrac{1}{f_{ji}} \left( y^j \partial_i F - y^i \partial_j F \right) =: \dfrac{f_{ji}^h}{f_{ji}}
\end{equation} 
at the points of the open neighborhood $U$ around $v$. Let us choose a value $\varepsilon>0$ such that all the elements 
\[ \begin{array}{rcccl}
w_1 & := & v-\varepsilon \cdot \partial/\partial u^1(p) & = & [v^1-\varepsilon, v^2, v^3, \dots, v^n] \\
w_2 & := & v-\varepsilon \cdot \partial/\partial u^2(p) &= & [v^1, v^2-\varepsilon, v^3, \dots, v^n] \\
& \vdots &&& \\
w_n & := & v-\varepsilon \cdot \partial/\partial u^n(p) & = & [v^1, v^2, v^3, \dots, v^n-\varepsilon]
\end{array} \]
are contained in $U$. Then \eqref{eq-c} implies the system  
\begin{equation} \label{ceq-indep} \begin{bmatrix}
v^1-\varepsilon & v^2 & v^3 & \cdots & v^n \\[4pt]
v^1 & v^2-\varepsilon & v^3 & \dots & v^n \\[4pt]
\vdots & \vdots & \vdots & \ddots & \vdots \\[4pt]
v^1 & v^2 & v^3 & \dots & v^n-\varepsilon
\end{bmatrix} \cdot \begin{bmatrix} \rho_1 \\[4pt] \rho_2 \\[4pt] \vdots \\[4pt] \rho_n \end{bmatrix} = \begin{bmatrix}
f^h_{ji}/f_{ji}(w_1) \\[4pt] f^h_{ji}/f_{ji}(w_2) \\[4pt] \vdots \\[4pt] f^h_{ji}/f_{ji}(w_n) \end{bmatrix}
\end{equation}
of linear equations. Using the notation
\[ V:= \begin{bmatrix}
v^1 & v^2 & \cdots & v^n \\
\vdots & \vdots & \ddots & \vdots \\
v^1 & v^2 & \cdots & v^n
\end{bmatrix}, \]
we have to investigate the regularity of the matrix $V-\varepsilon I$, where $I$ denotes the identity matrix of the same type as $V$.

\begin{lemm} The matrix $V-\varepsilon I$ is regular if and only if $\varepsilon \notin \{ 0, \widetilde{v}:=v^1+\dots+v^n \}$.
\end{lemm}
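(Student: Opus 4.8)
The plan is to exploit the fact that $V$ is an outer product and hence has rank at most one. Writing $\mathbf{1}=[1,\dots,1]^T$ for the all-ones column vector and regarding $v=[v^1,\dots,v^n]^T$ as a column vector, each row of $V$ equals $[v^1,\dots,v^n]$, so $V=\mathbf{1}\,v^T$. First I would read off the spectrum of $V$ from this structure: applying $V$ to $\mathbf{1}$ gives $V\mathbf{1}=\mathbf{1}(v^T\mathbf{1})=\widetilde{v}\,\mathbf{1}$, so $\widetilde{v}=v^1+\dots+v^n$ is an eigenvalue, while every $w$ orthogonal to $v$ satisfies $Vw=\mathbf{1}(v^Tw)=\mathbf{0}$ and hence lies in the kernel. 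Since the orthogonal complement of $v$ is $(n-1)$-dimensional, the characteristic polynomial of $V$ must be $\det(\lambda I-V)=\lambda^{n-1}(\lambda-\widetilde{v})$, the standard form for a rank-one matrix with trace $\widetilde{v}$.

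Next I would shift by $-\varepsilon I$. Because the characteristic polynomial of $V-\varepsilon I$ is obtained from that of $V$ by the substitution $\lambda\mapsto\lambda+\varepsilon$, the eigenvalues of $V-\varepsilon I$, counted with multiplicity, are $\widetilde{v}-\varepsilon$ (once) and $-\varepsilon$ ($n-1$ times). Taking the product yields
\[
\det(V-\varepsilon I)=(\widetilde{v}-\varepsilon)(-\varepsilon)^{n-1}=(-1)^{n-1}\varepsilon^{n-1}(\widetilde{v}-\varepsilon).
\]
The matrix $V-\varepsilon I$ is regular exactly when this determinant is nonzero, that is, if and only if $\varepsilon\neq 0$ and $\varepsilon\neq\widetilde{v}$, which is the asserted condition $\varepsilon\notin\{0,\widetilde{v}\}$. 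As an alternative to the eigenvalue bookkeeping, the same formula drops out for $\varepsilon\neq 0$ from the matrix determinant lemma applied to $V-\varepsilon I=-\varepsilon I+\mathbf{1}\,v^T$.

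There is no substantial obstacle in the argument; the only point requiring attention is the edge value $\varepsilon=0$, where the determinant-lemma route breaks down because the prefactor $\det(-\varepsilon I)$ vanishes. The eigenvalue computation avoids this, since the displayed determinant is a polynomial identity in $\varepsilon$ valid for every value; and directly, $V-0\cdot I=V=\mathbf{1}\,v^T$ is a nonzero rank-one matrix (here $v\neq\mathbf{0}$ since $v\in T_p^{\circ}M$), hence singular for $n\geq 2$, in agreement with $0\in\{0,\widetilde{v}\}$. The conclusion also survives the degenerate case $\widetilde{v}=0$: the two excluded values then coincide and regularity reduces to $\varepsilon\neq 0$, which is exactly what the formula gives.
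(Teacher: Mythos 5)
Your proof is correct and follows essentially the same route as the paper: both identify $V$ as a rank-one matrix with spectrum $\{0 \text{ ($n-1$ times)}, \widetilde{v}\}$ and read off the regularity of $V-\varepsilon I$ from the characteristic polynomial (the paper phrases this via the transpose map $\varphi$ with image $\mathrm{span}(v)$, you via the outer-product decomposition $V=\mathbf{1}v^T$). Your explicit determinant formula and the remark on the degenerate case $\widetilde{v}=0$ are welcome refinements, but they do not change the underlying argument.
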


\begin{proof} The determinant $\mathrm{det}(V-\varepsilon I)$ is the characteristic polynomial of $V$ with $\varepsilon$ as the variable. It is zero if and only if $\varepsilon$ is an eigenvalue of $V$. Consider the transpose of $V$ as the matrix of a linear transformation $\varphi$ (the eigenvalues are the same as those of $V$). Since the image of $\varphi$ is the line generated by $v$, it follows that $\varphi$ has a kernel of dimension $n-1$ and $v$ is an eigenvector corresponding to the eigenvalue $\widetilde{v}:=v^1+\dots+v^n$ because of
\[ \varphi(v)= \begin{bmatrix}
v^1 & \cdots & v^1 \\ \vdots & \ddots & \vdots \\ v^n & \cdots & v^n \end{bmatrix} \cdot \begin{bmatrix} v^1 \\ \vdots \\ v^n\end{bmatrix} = \begin{bmatrix}
v^1 (v^1+\dots+v^n) \\ \vdots \\v^n(v^1+\dots+v^n)
\end{bmatrix} = \widetilde{v} \cdot v. \qedhere\]
\end{proof}

\begin{lemm} Choosing $\varepsilon \notin \{ 0, \widetilde{v}:=v^1+\dots+v^n\}$, we have
\[ \left[ V-\varepsilon I \right]^{-1} = \dfrac{1}{(\widetilde{v}-\varepsilon)\varepsilon} \left[ V-(\widetilde{v}-\varepsilon) I \right]. \]
\end{lemm}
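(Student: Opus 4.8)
The plan is to guess that the inverse is itself a polynomial in $V$ and to verify the proposed formula by a single direct multiplication, exploiting the fact that $V$ satisfies a simple quadratic relation. First I would record the structural observation that $V$ is a rank-one matrix: writing $\mathbf{1}=[1,\dots,1]^T$ for the all-ones column and $v=[v^1,\dots,v^n]^T$, the matrix whose every row equals $[v^1,\dots,v^n]$ is exactly $V=\mathbf{1}\,v^T$. Consequently
\[ V^2 = \mathbf{1}\,(v^T\mathbf{1})\,v^T = (v^1+\dots+v^n)\,\mathbf{1}\,v^T = \widetilde{v}\,V, \]
so $V$ annihilates the quadratic polynomial $t^2-\widetilde{v}\,t$. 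This single identity is the engine of the whole computation.

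Next I would simply multiply the candidate inverse against $V-\varepsilon I$. Expanding,
\[ (V-\varepsilon I)\bigl[V-(\widetilde{v}-\varepsilon)I\bigr] = V^2 - \widetilde{v}\,V + \varepsilon(\widetilde{v}-\varepsilon)\,I, \]
where the terms linear in $V$ cancel after collecting $-(\widetilde{v}-\varepsilon)V-\varepsilon V = -\widetilde{v}\,V$. Substituting $V^2=\widetilde{v}\,V$ collapses the right-hand side to $\varepsilon(\widetilde{v}-\varepsilon)\,I$. Dividing by the scalar $\varepsilon(\widetilde{v}-\varepsilon)$ then yields
\[ (V-\varepsilon I)\cdot\dfrac{1}{(\widetilde{v}-\varepsilon)\varepsilon}\bigl[V-(\widetilde{v}-\varepsilon)I\bigr] = I, \]
which is precisely the assertion. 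The division is legitimate exactly because the hypothesis $\varepsilon\notin\{0,\widetilde{v}\}$ guarantees $\varepsilon(\widetilde{v}-\varepsilon)\neq 0$; this is the same nonvanishing condition that made $V-\varepsilon I$ regular in the previous lemma.

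There is essentially no deep obstacle here: the only point worth care is that the computation above produces a one-sided (right) inverse, and I would close the argument by noting that for a square matrix a right inverse is automatically two-sided, or equivalently that the two factors are polynomials in $V$ and hence commute, so the same product in the opposite order also equals $I$. The genuinely clever ingredient is spotting the relation $V^2=\widetilde{v}\,V$ at the outset; once it is in hand, the verification is purely mechanical and requires no eigenvector analysis or explicit entrywise inversion.
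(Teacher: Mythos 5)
Your proof is correct and follows essentially the same route as the paper: both verify the identity $[V-\varepsilon I]\cdot[V-(\widetilde{v}-\varepsilon)I]=(\widetilde{v}-\varepsilon)\varepsilon I$ by expanding and using $V\cdot V=\widetilde{v}\,V$. Your rank-one factorization $V=\mathbf{1}\,v^T$ justifying that key identity, and the remark that a one-sided inverse of a square matrix is two-sided, are small tidy additions but do not change the argument.
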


\begin{proof} We shall prove the formula
\[ \left[V-\varepsilon I\right] \cdot \left[ V-(\widetilde{v}-\varepsilon)I\right] = (\widetilde{v}-\varepsilon)\varepsilon I.\]
Rearranging the left-hand side, 
\[ V \cdot V - (\widetilde{v}-\varepsilon)V-\varepsilon V + \varepsilon (\widetilde{v}-\varepsilon)I=(\widetilde{v}-\varepsilon)\varepsilon I \]
because of $V\cdot V=\widetilde{v} \cdot V$. 
\end{proof}

Returning to \eqref{ceq-indep}, 
\[\begin{bmatrix} \rho_1 \\[4pt] \rho_2 \\[4pt] \vdots \\[4pt] \rho_n \end{bmatrix} =  \dfrac{1}{(\widetilde{v}-\varepsilon)\varepsilon} \begin{bmatrix}
v^1+\varepsilon-\widetilde{v} & v^2 &  \cdots & v^n \\[4pt]
v^1 & v^2+\varepsilon-\widetilde{v} &  \dots & v^n \\[4pt]
\vdots & \vdots &  \ddots & \vdots \\[4pt]
v^1 & v^2 &  \dots & v^n+\varepsilon-\widetilde{v}
\end{bmatrix} \cdot \begin{bmatrix}
f^h_{ji}/f_{ji}(w_1) \\[4pt] f^h_{ji}/f_{ji}(w_2) \\[4pt] \vdots \\[4pt] f^h_{ji}/f_{ji}(w_n) \end{bmatrix}. \]
 \begin{thm} If a non-Riemannian Finsler manifold admits a semi-symmetric compatible linear connection, then the only possible values of the components $\rho_k$ in formula  \eqref{semi-symm-torscomp} for its torsion at the point $p\in M$ are
\begin{equation}
\rho_k = \dfrac{1}{\varepsilon} \left( \dfrac{1}{\widetilde{v}-\varepsilon} \sum_{l=1}^n v^l \dfrac{f^h_{ji}}{f_{ji}}(w_l) - \dfrac{f^h_{ji}}{f_{ji}}(w_k) \right),
\end{equation}
where,
\begin{itemize}
\item $v=[v^1, \dots, v^n]$ is a not vertically contact vector in $T^{\circ}_p M$,
\item $\varepsilon \notin \{ 0, \widetilde{v}:=v^1+\dots+v^n\}$ is a radius of a closed ball around $v$ in $T_pM$ all of whose elements are also not vertically contact,
\end{itemize}
\begin{itemize}[itemsep=6pt]
\item $w_i=[v^1, \dots, v^{i-1}, v^i-\varepsilon, v^{i+1}, \dots, v^n]$, $i=1, \ldots, n$,
\item $f^h_{ji}= y^j \partial_i F - y^i \partial_j F = y^j \dfrac{\partial F}{\partial x^i} - y^i \dfrac{\partial F}{\partial x^j}$,
\item $f_{ji}= y^j \dot{\partial}_i F - y^i \dot{\partial}_j F = y^j \dfrac{\partial F}{\partial y^i} - y^i \dfrac{\partial F}{\partial y^j}$ and 
\item the coordinates on the base manifold form a normal coordinate system with respect to the averaged Riemannian metric $\gamma$ around the point $p\in M$.
\end{itemize}
\end{thm}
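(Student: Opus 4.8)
The plan is to obtain the stated componentwise expression by reading off the $k$-th entry of the matrix identity established immediately above the theorem, so the genuine work has already been localized into the two auxiliary lemmas about $V-\varepsilon I$. First I would recall that for a compatible semi-symmetric connection the eliminated form \eqref{CEQ-elim} of CEQ forces the single scalar relation \eqref{eq-c}, namely $\dotprod{C,\rho} = f^h_{ji}/f_{ji}$, to hold at every point of the neighborhood $U$, where the admissible pair of indices $i\neq j$ with $f_{ij}(v)\neq 0$ exists by Lemma \ref{rowvector-notvertcont}. The key observation is that $\rho=[\rho_1,\dots,\rho_n]$ is a single vector (its components depend on $p$ but not on the reference element), whereas $\dotprod{C_w,\rho}=\sum_l w^l\rho_l$ in the normal coordinates, so \eqref{eq-c} really amounts to $n$ scalar constraints once the base point $w$ is allowed to vary.

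Next I would evaluate \eqref{eq-c} at the $n$ perturbed vectors $w_1,\dots,w_n$, which produces exactly the linear system \eqref{ceq-indep} whose coefficient matrix is $V-\varepsilon I$. By the preceding two lemmas this matrix is regular for $\varepsilon\notin\{0,\widetilde{v}\}$ with inverse $\frac{1}{(\widetilde{v}-\varepsilon)\varepsilon}\left[V-(\widetilde{v}-\varepsilon)I\right]$; multiplying the right-hand side of \eqref{ceq-indep} by this inverse yields the displayed matrix formula for $\rho$, and extracting its $k$-th component gives precisely the claimed expression for $\rho_k$. This last step is purely mechanical once the explicit inverse is in hand.

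The main obstacle is not depth but well-definedness and the logical status of the formula. I would first check that each denominator $f_{ji}(w_l)$ is nonzero: this holds because $\varepsilon$ is taken small enough that every $w_l$ lies in $U$, on which $f_{ji}$ is continuous and agrees in sign with $f_{ij}(v)\neq 0$, so none of the quotients $f^h_{ji}/f_{ji}(w_l)$ degenerate, and this is the very neighborhood on which \eqref{eq-c} is valid. I would then stress that the formula gives the \emph{only} possible value of each $\rho_k$: since \eqref{eq-c} is a necessary consequence of compatibility and $V-\varepsilon I$ is regular, the vector $\rho$ is forced uniquely, while independence of the outcome from the auxiliary choices of $v$, $\varepsilon$, and the index pair $i,j$ is exactly the content of the unicity result in Theorem \ref{original}.
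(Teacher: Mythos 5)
Your proposal is correct and follows essentially the same route as the paper: derive the scalar relation \eqref{eq-c} from the eliminated system \eqref{CEQ-elim}, evaluate it at the $n$ perturbed elements $w_1,\dots,w_n$ to obtain \eqref{ceq-indep}, and invert $V-\varepsilon I$ via the two lemmas to read off $\rho_k$. Your added remarks on the nonvanishing of the denominators $f_{ji}(w_l)$ and on why the formula is forced (necessity of \eqref{eq-c} plus regularity of $V-\varepsilon I$) are consistent with, and slightly more explicit than, the paper's presentation.
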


\section{Acknowledgments}

Márk Oláh is supported by the UNKP-21-3 New National Excellence Program of the Ministry for Innovation and Technology from the source of the National Research, Development and Innovation Fund of Hungary.

\end{document}